\theoremstyle{definition}
\newtheorem{defn}{Definition}[section]
\newtheorem{thm}[defn]{Theorem}
\newtheorem*{thm*}{Theorem}
\newtheorem{theorem}{Theorem}
\newtheorem{cor}[defn]{Corollary}
\newtheorem{prop}[defn]{Proposition}
\newtheorem{example}[defn]{Example}
\newcommand{\gap}{\, \vspace{1em}}
\newcommand{\disc}{\, \text{disc}}
\newcommand{\Gal}{\, \text{Gal}}
\newcommand{\sign}{\, \text{sign}}
\newcommand{\N}{\mathbb{N}}
\newcommand{\Z}{\mathbb{Z}}
\newcommand{\Q}{\mathbb{Q}}
\newcommand{\F}{\mathbb{F}}
\newcommand{\M}{\mathcal{M}}
\newcommand{\I}{\mathcal{I}}
\newcommand{\Frob}{\text{Frob}}
\newcommand{\p}{\mathfrak{p}}
\let\oldproofname=\proofname
\renewcommand{\proofname}{\rm\bf{\oldproofname}}
\renewcommand{\leq}{\leqslant}
\renewcommand{\subset}{\subseteq}
\newcommand\restr[2]{{
  \left.\kern-\nulldelimiterspace 
  #1 
  \vphantom{\big|} 
  \right|_{#2} 
  }}
\begin{document}

\pagenumbering{arabic}

\title{A ``Proto-Pellet's Formula" for the M\"{o}bius Function}
\author[A. Afshar]{Ardavan Afshar}
\address{Department of Mathematics\\University College London\\
25 Gordon Street, London, England}
\email{ardavan.afshar.15@ucl.ac.uk}

\maketitle

\begin{abstract}
We give a short proof of ``Pellet's Formula" for the M\"{o}bius Function on $\F_q[T]$, deriving an intermediate formula (which we call ``Proto-Pellet's Formula") along the way. We then construct and prove an analogous ``Proto-Pellet's Formula" for the M\"{o}bius Function for a number field (including the usual M\"{o}bius function on the integers).
\end{abstract}

\gap

\section{Introduction} 

\gap

Let $q$ be an odd prime power, and let $\M = \{f \in \F_q[T]: f \text{ monic}\}$. Define the M\"{o}bius Function on $\M$ by
$$ \mu(f) = \begin{cases} (-1)^r &\text{ if } f=p_1 \cdots p_r \text{ distinct primes} \\ 0 &\text{ else} \end{cases}$$
for $f \in \M$, in analogy with the usual M\"{o}bius Function on the integers. In $\M$ we have the following formula for $\mu$
\begin{theorem}[``Pellet's Formula"] \label{Pellet}
Let $f \in \M$, and let $ \chi $ be the quadratic character on $\F_q$. Then $$ \mu(f) = (-1)^{\deg f}\chi(\text{disc}f) $$ 
\end{theorem}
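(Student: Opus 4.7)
The plan is to reduce to the case of a single irreducible by multiplicativity and then handle that case via the action of Frobenius on the roots.

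First, I would dispense with the non-squarefree case. If some irreducible $p$ divides $f$ with multiplicity at least $2$, then $\gcd(f,f')\neq 1$, so $\disc f=0$ and hence $\chi(\disc f)=0$; meanwhile $\mu(f)=0$ by definition, so both sides vanish. We may therefore assume $f=p_1\cdots p_r$ is a product of distinct monic irreducibles.

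Next I would invoke the classical identity
$$\disc(gh)=\disc(g)\,\disc(h)\,\mathrm{Res}(g,h)^2 \qquad \text{for coprime } g,h\in\M.$$
Because $\chi$ is a quadratic character, the squared resultant falls away under $\chi$, yielding $\chi(\disc f)=\prod_{i=1}^r \chi(\disc p_i)$. Combined with $(-1)^{\deg f}=\prod_i (-1)^{\deg p_i}$ and $\mu(f)=(-1)^r$, Pellet's formula reduces to the single claim that, for every monic irreducible $p\in\M$ of degree $n$,
$$\chi(\disc p)=(-1)^{n-1}.$$

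To prove this, let $\alpha_1,\dots,\alpha_n$ be the roots of $p$ in $\F_{q^n}$ and set $\delta=\prod_{i<j}(\alpha_i-\alpha_j)$, so $\disc p=\delta^2$. Since $p$ is irreducible of degree $n$, the Frobenius $\Frob_q\colon x\mapsto x^q$ permutes the $\alpha_i$ as a single $n$-cycle, and therefore acts on $\delta$ by the sign of that cycle: $\Frob_q(\delta)=(-1)^{n-1}\delta$. If $n$ is odd, then $\delta\in\F_q$, so $\disc p$ is a square in $\F_q$ and $\chi(\disc p)=1=(-1)^{n-1}$. If $n$ is even, then $\Frob_q(\delta)=-\delta\neq\delta$ (using that $q$ is odd), so $\delta\notin\F_q$, whence $\disc p$ is a non-square in $\F_q$ and $\chi(\disc p)=-1=(-1)^{n-1}$.

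The main obstacle is this last step: one must verify that Frobenius genuinely acts as an $n$-cycle on the roots (a standard consequence of $\F_{q^n}$ being the splitting field of $p$ and $\Frob_q$ generating $\Gal(\F_{q^n}/\F_q)$) and correctly convert the sign of this permutation into the scalar by which it multiplies $\delta$. The remaining ingredients -- vanishing in the non-squarefree case and the resultant formula for $\disc$ -- are purely formal.
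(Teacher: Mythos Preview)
Your proof is correct. The core mechanism---Frobenius permutes the roots of each irreducible factor as a full cycle, and the sign of that permutation governs whether $\sqrt{\disc}$ lies in $\F_q$---is the same as the paper's. The organizational difference is that you reduce to a single irreducible via the multiplicativity identity $\disc(gh)=\disc(g)\,\disc(h)\,\mathrm{Res}(g,h)^2$ and then prove $\chi(\disc p)=(-1)^{\deg p-1}$ for each $p$, whereas the paper works with the full squarefree $f$ throughout: it first records the ``Proto-Pellet'' identity $\mu(f)=(-1)^{\deg f}\,\sign(\Frob_q\mid f)$ for arbitrary squarefree $f$, and then proves $\sign(\Frob_q\mid f)=\chi(\disc f)$ in one stroke via the general criterion that $\Gal(f)\subset\mathrm{Alt}(f)$ if and only if $\disc f$ is a square. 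Your route trades the general Galois-in-alternating-group lemma for the resultant formula; the paper's route avoids the resultant entirely and, as a bonus, isolates the intermediate Proto-Pellet formula as a result of independent interest (which is the whole point of the paper, since that formulation is what gets transported to number fields).
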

a proof of which can be found, for example, as Lemma 4.1 in \cite{Conrad1}. This is an important ingredient in the setting of $\F_q[T]$, such as in  
Sawin and Shusterman's proof of the Chowla and Twin Prime conjectures in $\F_q[T]$ (see \cite{S-S}). \\

We give another short proof of this formula, and in proving it, we derive an intermediate formula, which we call ``Proto-Pellet's Formula". Let $\Frob_q$ be the Frobenius element on ${\overline\F_q}$, which sends $\alpha$ to $\alpha^q$, then we have
\begin{theorem}[``Proto-Pellet's Formula"] \label{ProtoPellet}
Let $f \in \M$ square free, then $$ \mu(f) = (-1)^{\deg f} \text{sign}(\Frob_q | f) $$ where $\Frob_q | f$ denotes the action of $\Frob_q$ on the roots of $f$.
\end{theorem}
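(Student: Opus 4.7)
The plan is to decompose the permutation induced by $\Frob_q$ on the roots of $f$ according to the irreducible factorization of $f$, and then read off its sign.

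Since $f$ is squarefree of degree $n$, it has exactly $n$ distinct roots in $\overline{\F_q}$, and $\Frob_q$ permutes these roots because $f \in \F_q[T]$. The first step is to observe that if $f = p_1 \cdots p_r$ is the factorization into distinct monic irreducibles, then the roots of each $p_i$ form a single $\Frob_q$-orbit (this is standard: an irreducible $p_i$ of degree $d_i$ has its roots cyclically permuted by $\Frob_q$ in one orbit of length $d_i$, since the splitting field of $p_i$ over $\F_q$ is $\F_{q^{d_i}}$ and $\Frob_q$ generates its Galois group cyclically). Thus $\Frob_q|f$ is a product of $r$ disjoint cycles of lengths $d_1,\ldots,d_r$, with $d_1+\cdots+d_r = n$.

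The second step is to compute the sign. A cycle of length $d$ has sign $(-1)^{d-1}$, so
$$ \text{sign}(\Frob_q | f) = \prod_{i=1}^{r}(-1)^{d_i-1} = (-1)^{n-r}. $$
Multiplying by $(-1)^n$ gives $(-1)^{2n-r} = (-1)^r = \mu(f)$, as desired.

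There is no real obstacle here; the only point needing care is the claim that the roots of an irreducible $p_i \in \F_q[T]$ of degree $d_i$ form a single $\Frob_q$-orbit, which I would justify by noting that the $\F_q$-Galois orbit of any root $\alpha$ of $p_i$ has size equal to $[\F_q(\alpha):\F_q] = d_i$, and that $\Gal(\F_{q^{d_i}}/\F_q)$ is generated by $\Frob_q$. Everything else is bookkeeping.
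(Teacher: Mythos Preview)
Your proof is correct and follows essentially the same approach as the paper: factor $f$ into distinct irreducibles, observe that $\Frob_q$ acts on the roots of each irreducible factor as a single cycle of length equal to its degree, and compute the sign as $(-1)^{\deg f - r}$. The paper's proof is slightly terser (it omits the justification for why each irreducible contributes a single cycle), but the argument is identical.
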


\gap

We then construct and prove an analogue of this formula for the M\"{o}bius Function for any number field $A/\Q$, which we define as follows. Let $\mathcal{O}_A$ be the ring of integers in $A$, and let 
$\mathcal{I}_A$ be the set of non-zero ideals in $\mathcal{O}_A$. Then the  M\"{o}bius Function for $A/\Q$, $\mu_A: \mathcal{I}_A \rightarrow \{-1, 0 , 1\}$ is defined by  
$$ \mu_A(I) = \begin{cases} (-1)^r &\text{ if } I=\mathfrak{p}_1 \cdots \mathfrak{p}_r \text{ distinct prime ideals} \\ 0 &\text{ else} \end{cases}$$
so that $\mu_\Q = \mu$ is the usual M\"{o}bius function on the integers. To do this, we first pick some arbitrary additive function $\nu: \I_A \to \N$ to mimic the role of degree in $\F_q[T]$, and for each prime ideal $\mathfrak{p}$ we construct a polynomial $f_{\mathfrak{p}, \nu}$ whose Galois group is isomorphic to $\Z/\nu(\mathfrak{p})\Z$ and generated by some homomorphism $\sigma_{\mathfrak{p}, \nu}$. We then lift these homomorphisms to an appropriate profinite Galois group and compose them to get a homomorphism $\sigma_{\nu}$ (which mimics the role of $\Frob_q$ in $\F_q[T]$), and set $f_{I, \nu} = \prod_{\mathfrak{p}|I \text{ prime}} f_{\mathfrak{p}, \nu} $ to prove
\begin{theorem}[``Proto-Pellet's Formula" for number fields] \label{ProtoPelletIntegers1}
Let $\nu: \I_A \to \N$ be an additive function. Then there exists a Galois homomorphism $\sigma_{\nu} \in \Gal(\overline{\Q}/\Q)$ and a family of polynomials $\left(f_{I, \nu}\right)_{I \in \mathcal{I}_A}$ such that, for all $I \in \I_A$ square-free
$$ \mu_A(I) = (-1)^{\nu(I)} \text{sign}(\sigma_\nu | f_{I, \nu}) $$
where $\sigma_\nu | f$ denotes the action of $\sigma_\nu$ on the roots of $f$.
\end{theorem}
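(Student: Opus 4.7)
The plan is to mimic the $\F_q[T]$ argument behind Theorem \ref{ProtoPellet}. There, $\Frob_q$ permutes the roots of an irreducible monic $p$ of degree $d$ as a single $d$-cycle, contributing sign $(-1)^{d-1}$, so for a square-free $f$ with $r$ prime factors the total sign is $(-1)^{\deg f - r}$, and multiplying by $(-1)^{\deg f}$ recovers $\mu(f)$. Over a number field I would build, for each prime $\p \subset \mathcal{O}_A$, a degree-$\nu(\p)$ polynomial $f_{\p,\nu}$ together with a cyclic Galois element $\sigma_{\p,\nu}$ of order $\nu(\p)$ permuting its roots as a single cycle, and then assemble these local data into a single global $\sigma_\nu \in \Gal(\overline{\Q}/\Q)$.

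For the local construction I would invoke Dirichlet's theorem: for each $\p$, pick a rational prime $\ell_\p$ with $\nu(\p) \mid \ell_\p - 1$, taking the $\ell_\p$ pairwise distinct as $\p$ ranges over the countably many primes of $\mathcal{O}_A$. Let $L_\p \subset \Q(\zeta_{\ell_\p})$ be the unique subfield of degree $\nu(\p)$, which exists because $\Gal(\Q(\zeta_{\ell_\p})/\Q) \cong \Z/(\ell_\p-1)\Z$ is cyclic. Take $f_{\p,\nu}$ to be the minimal polynomial of a primitive element of $L_\p$ and $\sigma_{\p,\nu}$ a generator of $\Gal(L_\p/\Q) \cong \Z/\nu(\p)\Z$. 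Since $L_\p$ is the splitting field of $f_{\p,\nu}$, its Galois group acts simply transitively on the $\nu(\p)$ roots, so any generator is a single $\nu(\p)$-cycle.

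To glue, I would use the fact that for pairwise distinct rational primes $\ell_\p$ the cyclotomic fields $\Q(\zeta_{\ell_\p})$ are pairwise linearly disjoint over $\Q$, and hence so are the $L_\p$. Their compositum $L$ therefore satisfies $\Gal(L/\Q) \cong \prod_\p \Gal(L_\p/\Q)$, and the element $(\sigma_{\p,\nu})_\p$ in this product lifts, via the restriction surjection $\Gal(\overline{\Q}/\Q) \twoheadrightarrow \Gal(L/\Q)$, to the desired $\sigma_\nu$, which restricts to $\sigma_{\p,\nu}$ on each $L_\p$.

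Finally, for a square-free $I = \p_1 \cdots \p_r$, set $f_{I,\nu} = \prod_i f_{\p_i,\nu}$; its roots split as a disjoint union of the root sets of the $f_{\p_i,\nu}$, since these are distinct irreducibles in linearly disjoint extensions. On the $i$-th piece $\sigma_\nu$ acts as $\sigma_{\p_i,\nu}$, a single $\nu(\p_i)$-cycle, so $\text{sign}(\sigma_\nu | f_{I,\nu}) = \prod_i (-1)^{\nu(\p_i)-1} = (-1)^{\nu(I) - r}$, and multiplying by $(-1)^{\nu(I)}$ yields $(-1)^r = \mu_A(I)$. I expect the main obstacle to be the bookkeeping in the gluing step, namely justifying the pairwise distinct choice of the $\ell_\p$ and the resulting linear disjointness of the $L_\p$; once that is in hand, the sign calculation is essentially identical to the function-field case. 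A minor convention is needed for $\nu(\p) \in \{0,1\}$: the degree-$1$ case is vacuous, while $\nu(\p) = 0$ should either be excluded from the construction or interpreted via empty products.
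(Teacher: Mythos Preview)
Your proposal is correct and follows essentially the same construction as the paper: choose pairwise distinct rational primes $\ell_\p \equiv 1 \pmod{\nu(\p)}$ via Dirichlet, take the degree-$\nu(\p)$ subfield of $\Q(\zeta_{\ell_\p})$ with a primitive element and cyclic generator, set $f_{I,\nu} = \prod_{\p \mid I} f_{\p,\nu}$, and perform the identical sign computation. The only difference is in the gluing step: you invoke linear disjointness of the $\Q(\zeta_{\ell_\p})$ directly to get $\Gal(L/\Q) \cong \prod_\p \Gal(L_\p/\Q)$, whereas the paper proves the equivalent statement by an explicit inductive degree-counting argument (Proposition~\ref{extension}); your route is shorter but relies on the same underlying fact.
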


\gap

\section{Pellet's Formula in $\F_q[T]$} \label{PelletSection}

\gap

We begin with a proof of Theorem \ref{ProtoPellet}:

\begin{proof}[Proof of Theorem \ref{ProtoPellet}]
For a square-free polynomial $f = p_1 \cdots p_r \in \M$, notice that $\Frob_q$ acts on the roots of $f$ as a product of cycles $\tau_1, \cdots, \tau_r$, where $\tau_i$ permutes the roots of $p_i$ as a cycle of length $\deg p_i$. This means that $\text{sign}(\Frob_q|f) = (-1)^{\deg p_1-1} \cdots(-1)^{\deg p_r-1} = (-1)^{\deg f - r} $.
\end{proof}

\gap

To move from this to Pellet's formula, we first define

\begin{defn}
For a polynomial $f$, let $\text{Aut}(f)$ be the automorphism group on the roots of $f$ and let $\text{Gal}(f)$ be its Galois closure. Let $\text{Sym}(f)$ be the symmetric group, and $\text{Alt}(f)$ the alternating group, on the roots of $f$. 
\end{defn}

and then prove an auxiliary result

\begin{prop} \label{Gal-disc}
Let $K$ be a field and $f \in K[T]$ square-free, and let $\rho_f : \text{Gal}(f) \hookrightarrow \text{Sym}(f)$ be the natural inclusion map.
Then $\rho_f(\text{Gal}{f}) \subset \text{Alt}(f) \iff \disc f \text{ is a square in } K^{\times}$.
\end{prop}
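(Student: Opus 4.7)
The plan is to work with the ``square root of the discriminant'', $\delta := \prod_{i<j}(\alpha_i - \alpha_j)$, where $\alpha_1, \dots, \alpha_n$ are the roots of $f$ in a splitting field over $K$. This element satisfies $\delta^2 = \disc f$ by the standard formula, and $\delta \neq 0$ since $f$ is square-free. The whole proof will consist in translating both sides of the desired equivalence through the single auxiliary statement ``$\delta \in K$''.

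First I would verify that for every $\sigma \in \Gal(f)$, viewed through $\rho_f$ as a permutation of the roots, one has
$$ \sigma(\delta) \;=\; \sign(\rho_f(\sigma)) \cdot \delta. $$
This is immediate: applying the permutation $\rho_f(\sigma)$ reorders the factors $(\alpha_i - \alpha_j)$ of $\delta$, and each transposition flips the sign of one such factor, so the net sign change is exactly the sign of the permutation. Next, by the Galois correspondence the element $\delta$ lies in the fixed field $K$ iff it is fixed by every $\sigma \in \Gal(f)$; by the formula just established this is equivalent to $\sign(\rho_f(\sigma)) = 1$ for all $\sigma$, i.e.\ to $\rho_f(\Gal(f)) \subset \Alt(f)$.

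It remains to identify the condition ``$\delta \in K$'' with ``$\disc f \in (K^\times)^2$''. The forward implication is trivial since $\disc f = \delta^2$ with $\delta \in K^\times$. For the converse, if $\disc f = c^2$ with $c \in K^\times$, then $\delta^2 = c^2$ forces $\delta = \pm c$, and either choice of sign puts $\delta$ in $K$. Chaining the two equivalences gives the statement.

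The only real subtlety will be the characteristic-$2$ case: the argument ``$\delta^2 = c^2 \Rightarrow \delta = \pm c$'' uses that $\pm c$ are distinct in $K$, so the cleanest write-up assumes $\operatorname{char}(K) \neq 2$, which is anyway consistent with the odd-$q$ setting in which this proposition will be applied. Apart from that, the only piece of bookkeeping is the sign formula $\sigma(\delta) = \sign(\rho_f(\sigma))\delta$, which is a standard Vandermonde-type computation and should not be a genuine obstacle.
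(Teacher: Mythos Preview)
Your proposal is correct and follows essentially the same route as the paper: both introduce $\delta = \prod_{i<j}(\alpha_i-\alpha_j)$, use the identity $\sigma(\delta) = \sign(\rho_f(\sigma))\,\delta$, and then invoke the Galois correspondence to translate ``$\delta$ is fixed by $\Gal(f)$'' into the two sides of the equivalence. Your version is in fact slightly more careful, since you flag the characteristic~$2$ issue in the step $\delta^2 = c^2 \Rightarrow \delta = \pm c$, which the paper leaves implicit.
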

\begin{proof} Let $f$ have roots $\alpha_1, \cdots \alpha_n$ in $\overline{K}$ so that $ \disc f = \disc_K f = \prod_{i < j} (\alpha_i - \alpha_j)^2$. Then we have that
\begin{align*}
\disc f \text{ is a square in } K^{\times} &\iff \forall \sigma \in \Gal(f) \quad \sigma(\sqrt{\disc f}) = \sqrt{\disc f} \\
&\iff  \forall \sigma \in \Gal(f) \quad \sigma\left(\prod_{i < j} (\alpha_i - \alpha_j)\right) = \prod_{i < j} (\alpha_i - \alpha_j) \\
&\iff  \forall \sigma \in \Gal(f) \quad \prod_{i < j} (\alpha_i - \alpha_j) = \left(\prod_{i < j} \sigma(\alpha_i) - \sigma(\alpha_j)\right) = \text{sign}(\sigma)  \prod_{i < j} (\alpha_i - \alpha_j) \\
&\iff  \forall \sigma \in \Gal(f) \quad \text{sign}(\sigma) = 1.
\end{align*}
\end{proof}
\begin{cor} \label{frob-disc}
Let $f \in \M$ square-free, and let $ \chi $ be the quadratic character on $\F_q$. Then $$\text{sign}(\Frob_q | f) = \chi(\disc f)$$  
\end{cor}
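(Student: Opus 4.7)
The plan is to combine Proposition \ref{Gal-disc} with the standard fact that Galois groups over finite fields are cyclic, generated by the Frobenius. First I would observe that since $f \in \M$ is square-free, $\disc f$ is a nonzero element of $\F_q$, so $\chi(\disc f) \in \{-1, 1\}$; moreover, because the squares form a subgroup of index $2$ in $\F_q^{\times}$ (as $q$ is odd), we have the equivalence $\chi(\disc f) = 1 \iff \disc f \in (\F_q^{\times})^2$.

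Next I would invoke Proposition \ref{Gal-disc} with $K = \F_q$ to convert this into the statement
$$ \chi(\disc f) = 1 \iff \rho_f(\Gal(f)) \subset \text{Alt}(f). $$
Now I would use the key fact about finite fields: the Galois group $\Gal(f)$ of the splitting field of $f$ over $\F_q$ is cyclic, generated by the restriction of $\Frob_q$. Consequently, $\rho_f(\Gal(f))$ is the cyclic subgroup of $\text{Sym}(f)$ generated by $\rho_f(\Frob_q) = \Frob_q | f$. Since $\text{Alt}(f)$ is a subgroup of $\text{Sym}(f)$, a cyclic subgroup lies inside $\text{Alt}(f)$ if and only if its generator does, giving
$$ \rho_f(\Gal(f)) \subset \text{Alt}(f) \iff \sign(\Frob_q | f) = 1. $$

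Chaining these equivalences yields $\chi(\disc f) = 1 \iff \sign(\Frob_q | f) = 1$, and since both quantities take values in $\{-1, 1\}$, they must be equal. I don't anticipate any genuine obstacle here — the corollary is essentially an immediate reading of Proposition \ref{Gal-disc} through the lens of the cyclic structure of $\Gal(\overline{\F_q}/\F_q)$; the only thing to be mildly careful about is recording that $\disc f \neq 0$ so that $\chi(\disc f)$ is well-defined and nonzero, which is ensured by square-freeness.
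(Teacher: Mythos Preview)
Your proposal is correct and follows essentially the same route as the paper: invoke Proposition \ref{Gal-disc} with $K = \F_q$, use that $\Gal(f)$ is cyclic generated by $\Frob_q$, and conclude by noting both sides lie in $\{-1,1\}$. The only difference is that you spell out a few details (e.g.\ $\disc f \neq 0$, the cyclic-subgroup argument) that the paper leaves implicit.
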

\begin{proof}
For $f \in \M$ square-free, $\text{Gal}(f)$ is generated by $\Frob_q$, so by Proposition \ref{Gal-disc} we have that $$\disc f \text{ is a square in } \F_q^{\times} \iff \text{sign}(\Frob_q | f) = 1$$ from which the corollary follows.
\end{proof}

So, combining Theorem \ref{ProtoPellet} with Corollary \ref{frob-disc}, and noting that the discriminant vanishes on non-square-free polynomials, we get a proof of Theorem \ref{Pellet}.

\gap

\section{``Proto-Pellet's Formula" in number fields}

\gap

To begin, we pick some additive function $\nu: \mathcal{I}_A \to \N$ to mimic the role of degree in $\F_q[T]$.

\gap

Let $\mathcal{P}_A$ be the set of prime ideals in $A$, and put some order relation $<_A$ on $\mathcal{P}_A$: for example, one can order the prime ideals first by the size of their norm and then order those with the same norm arbitrarily (since there are only finitely many). For $\mathfrak{\p}$ a prime ideal, pick $q = q(\p, \nu)$ to be the minimal prime such that $q \equiv 1 \mod \nu(\p)$ and $q \neq q(\p', \nu)$ for some $\p' <_A \p$ prime (which  is always possible, by Dirichlet's Theorem for primes in arithmetic progression). Then consider $L = L_{\p, \nu} = \Q(\zeta_q)$ where $\zeta_q = e^{2\pi i/q}$ and note that $\Gal(L/\Q) \cong \Z/(q-1)\Z$. So, since $\nu(\p)$ divides $q-1$, there is some subgroup $H$ of $\Gal(L/\Q)$ such that $H \cong \Z/\frac{q-1}{\nu(\p)}\Z$. Therefore, if we let $K=K_{\p, \nu}=L^H:=\{x \in L \ | \ \sigma(x)=x \ \forall \sigma \in H\}$, we know by the Galois Correspondence that $\Gal(K/\Q) \cong \Z/\nu(\p)\Z$.

\gap

Moreover, by the Primitive Element Theorem, there exists some $\alpha_{\p, \nu} \in L$ such that $K = \Q(\alpha_{\p, \nu})$. So, if we let $f_{\p, \nu}$ be the minimal polynomial of $\alpha_{\p, \nu}$, then we have that $\Gal(f_{\p, \nu}) \cong \Z/\nu(\p)\Z$ and is generated by some $\sigma_{\p, \nu}$ which acts as a $\nu(\p)$-cycle on the roots of $f_{\p, \nu}$. In particular, this means that $\sign(\sigma_{\p, \nu} | f_{\p, \nu}) = (-1)^{\nu(\p)-1}$. 

\gap

Next, let $K_\nu = \Q(\{\alpha_{\p, \nu} \ |  \ \p \in \mathcal{P}_A\})$ and extend each $\sigma_{\p, \nu}$ to a map $\overline{\sigma}_{\p, \nu} \in \Gal(K_\nu/\Q)$ by $$\overline{\sigma}_{\p, \nu}(\alpha_{\p', \nu}) = \begin{cases} \sigma_{\p, \nu}(\alpha_{\p, \nu}) &\text{ if } \p = \p' \\ \alpha_{\p', \nu} &\text{ if } \p \neq \p' \end{cases} $$
The fact that we can do this follows from Proposition \ref{extension}. Then consider $\sigma_\nu \in \Gal(K_\nu/\Q)$ defined by $\sigma_\nu(\alpha_{\p, \nu}) = \sigma_{\p, \nu} (\alpha_{\p, \nu})$ for all prime ideals $\p \in \mathcal{P}_A$, so that by construction we have that $\sign(\sigma_{\nu} | f_{\p, \nu}) = \sign(\sigma_{\p, \nu} | f_{\p, \nu})$. We observe that $\sigma_\nu$ is well-defined by noting that it is a composition of the $\overline{\sigma}_{\p, \nu}$ over all prime ideals $\p \in \mathcal{P}_A$.

\gap

We take a brief aside to prove Proposition \ref{extension} as promised:

\begin{prop} \label{extension}
Let $\alpha_{\p, \nu}$ and $\sigma_{\p, \nu}$ be defined as above. Let $\{\p, \p_1, \cdots, \p_k\}$ be a set of distinct prime ideals in $\mathcal{P}_A$. Then we can extend $\sigma_{\p, \nu}$ to a map $\sigma^{(k)}_{\p, \nu} \in \Gal(\Q(\{\alpha_{\p', \nu} \ | \ \p' \in \{\p, \p_1, \cdots, \p_k\} \})/\Q)$ by $$\sigma^{(k)}_{\p, \nu}(\alpha_{\p', \nu}) = \begin{cases} \sigma_{\p, \nu}(\alpha_{\p, \nu}) &\text{ if } \p' = \p \\ \alpha_{\p', \nu} &\text{ if } \p' \in \{\p_1, \cdots, \p_k\} \end{cases} $$
\end{prop}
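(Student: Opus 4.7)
The plan is to exploit the linear disjointness of cyclotomic fields at distinct prime levels, which will let us simply prescribe the action of the desired automorphism on each $K_{\p', \nu}$ independently. First I would note that, by construction of $q(\p, \nu)$ (chosen to differ from $q(\p'', \nu)$ for every $\p'' <_A \p$), the rational primes $q = q(\p, \nu)$ and $q_i = q(\p_i, \nu)$ for $i = 1, \ldots, k$ are pairwise distinct. Hence the cyclotomic fields $L_{\p, \nu} = \Q(\zeta_q)$ and $L_{\p_i, \nu} = \Q(\zeta_{q_i})$ have compositum $\Q(\zeta_{qq_1\cdots q_k})$, and by the Chinese Remainder Theorem
$$\Gal(\Q(\zeta_{qq_1\cdots q_k})/\Q) \cong (\Z/q\Z)^\times \times \prod_{i=1}^{k}(\Z/q_i\Z)^\times \cong \Gal(L_{\p,\nu}/\Q) \times \prod_{i=1}^{k} \Gal(L_{\p_i, \nu}/\Q),$$
via the restriction maps, so the $L_{\p', \nu}$ for $\p' \in \{\p, \p_1, \ldots, \p_k\}$ are pairwise linearly disjoint over $\Q$.

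Next I would transfer this to the smaller fields $K_{\p', \nu}$. Since each $L_{\p', \nu}/\Q$ is abelian, every intermediate field (including $K_{\p', \nu}$) is automatically Galois over $\Q$. Therefore the compositum $M := \Q(\{\alpha_{\p', \nu}\,:\,\p' \in \{\p, \p_1, \ldots, \p_k\}\})$ is Galois over $\Q$, and the simultaneous-restriction map
$$\Gal(M/\Q) \longrightarrow \Gal(K_{\p,\nu}/\Q) \times \prod_{i=1}^{k} \Gal(K_{\p_i,\nu}/\Q)$$
is a well-defined group homomorphism. It is injective since an automorphism fixing every generator $\alpha_{\p', \nu}$ of $M$ is trivial; and it is surjective because it factors through (and is compatible with) the surjective restriction isomorphism for the compositum of the $L_{\p', \nu}$ displayed above, and any automorphism of the cyclotomic compositum with prescribed restrictions to each $L_{\p', \nu}$ restricts further to any desired prescribed automorphisms on the subfields $K_{\p', \nu}$.

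Finally, I would define $\sigma^{(k)}_{\p, \nu} \in \Gal(M/\Q)$ to be the unique preimage under this isomorphism of the tuple $(\sigma_{\p, \nu}, 1, 1, \ldots, 1)$. By construction this element restricts to $\sigma_{\p, \nu}$ on $K_{\p, \nu}$, so sends $\alpha_{\p, \nu}$ to $\sigma_{\p, \nu}(\alpha_{\p, \nu})$, and restricts to the identity on each $K_{\p_i, \nu}$, so fixes each $\alpha_{\p_i, \nu}$, exactly as required.

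The only real content is the linear disjointness step; once established, the conclusion is purely formal Galois theory. I expect the minor hazard in writing this up to be careful bookkeeping around the fact that we need $K_{\p', \nu}/\Q$ (not merely $L_{\p', \nu}/\Q$) to be Galois in order to apply the compositum form of Galois theory directly, but this follows immediately from $L_{\p', \nu}/\Q$ being abelian, so every subfield is Galois.
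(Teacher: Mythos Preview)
Your proof is correct and rests on the same underlying fact as the paper's---that the fields $K_{\p',\nu}$ are linearly disjoint because they sit inside cyclotomic fields of pairwise distinct prime conductors---but your route to that fact is more direct. The paper argues by induction on $k$: at each step it considers the injective restriction map $\Gal(K^{(l+1)}/\Q)\hookrightarrow\Gal(K^{(l)}/\Q)\times\Gal(K/\Q)$ and shows it is surjective by a contradiction on degrees, bounding $[L^{(l+1)}:K^{(l+1)}]$ recursively and comparing against the known degree $[L^{(l+1)}:\Q]=\prod(q(\p',\nu)-1)$. You instead go straight to the cyclotomic compositum $\Q(\zeta_{qq_1\cdots q_k})$, invoke the Chinese Remainder Theorem once to split its Galois group as a product, and then push this down to the $K_{\p',\nu}$ level by lifting prescribed automorphisms of the $K_{\p',\nu}$ to the $L_{\p',\nu}$ and restricting back. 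Your approach avoids the induction and the degree bookkeeping entirely, at the cost of needing to note explicitly that each $K_{\p',\nu}/\Q$ is Galois (which you do, via abelianness of $L_{\p',\nu}/\Q$). Both arguments are short; yours is the cleaner one.
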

\begin{proof}
We proceed by induction and note that the case $k=0$ is trivially true. Suppose that the case $k=l$ is true, so that we have constructed our desired function $\sigma_{\p, \nu}^{(l)}$ on $K^{(l)}$, where $K^{(m)}:= \Q(\{\alpha_{\p', \nu} \ | \ \p' \in \{\p, \p_1, \cdots, \p_m\} \})$, and we seek to extend this to $\sigma_{\p, \nu}^{(l+1)}$ on $K^{(l+1)}$. Let $K = \Q(\alpha_{\p_{l+1}, \nu})$ so that $K^{(l+1)} = K^{(l)}K$, and so that, by Theorem 1.1 of \cite{Conrad2} there is an injective homomorphism
$$ \rho: \Gal(K^{(l+1)}/\Q) = \Gal(K^{(l)}K/\Q) \to \Gal(K^{(l)}/\Q) \times \Gal(K/\Q) $$
given by $ \rho(\sigma) = (\restr{\sigma}{K^{(l)}}, \restr{\sigma}{K})$. If we can show that $\rho$ is an isomorphism, then we can set $\sigma_{\p, \nu}^{(l+1)} = \rho^{-1}(\sigma_{\p, \nu}^{(l)}, \iota)$ (where $\iota$ is the identity function) and then we are done. \\

So suppose that $\rho$ is not an isomorphism, which means that $|\Gal(K^{(l+1)}/\Q)| < |\Gal(K^{(l)}/\Q)||\Gal(K/\Q)|$ and so $[K^{(l+1)}: \Q] < [K^{(l)}: \Q][K : \Q]$. Now define $L^{(m)}  = \Q(\{\zeta_{q(\p', \nu)} \ | \ \p' \in \{\p, \p_1, \cdots, \p_{m}\}\})$ (where $q(\p, \nu)$ is defined as above) and observe that, by our assumption
$$ [L^{(l+1)}: \Q] = [L^{(l+1)}: K^{(l+1)}][K^{(l+1)} : \Q] < [L^{(l+1)}: K^{(l+1)}][K^{(l)}: \Q][K : \Q] $$
and we can recursively compute
\begin{align*}
[L^{(l+1)}: K^{(l+1)}] &= [L^{(l)}(\zeta_{q(\p_{l+1}, \nu)}):L^{(l)}(\alpha_{\p_{l+1}, \nu})] [L^{(l)}(\alpha_{\p_{l+1}, \nu}):K^{(l+1)}]\\
&\leq [\Q(\zeta_{q(\p_{l+1}, \nu)}):\Q(\alpha_{\p_{l+1}, \nu})] [\Q(\alpha_{\p_{l+1}, \nu})(\zeta_{q(\p, \nu)}, \zeta_{q(\p_1, \nu)}, \cdots, \zeta_{q(\p_l, \nu)}):\Q(\alpha_{\p_{l+1}, \nu})(\alpha_{\p, \nu}, \alpha_{\p_1, \nu}, \cdots, \alpha_{\p_l, \nu})] \\
&\leq \frac{q(\p_{l+1}, \nu) -1}{\nu(\p_{l+1})}[\Q(\zeta_{q(\p, \nu)}, \zeta_{q(\p_1, \nu)}, \cdots, \zeta_{q(\p_l, \nu)}):\Q(\alpha_{\p, \nu}, \alpha_{\p_1, \nu}, \cdots, \alpha_{\p_l, \nu})] \\
&= \frac{q(\p_{l+1}, \nu)-1}{\nu(\p_{l+1})} [L^{(l)}: K^{(l)}] \leq \frac{q(\p, \nu)-1}{\nu(\p)}\prod_{j=1}^{l+1} \frac{q(\p_j, \nu)-1}{\nu(\p_j)}
\end{align*}
and so
\begin{align*}
[L^{(l+1)}: \Q] &< \left(\frac{q(\p, \nu)-1}{\nu(\p)}\prod_{j=1}^{l+1} \frac{q(\p_j, \nu)-1}{\nu(\p_j)}\right)[K^{(l)}: \Q][K : \Q]  \\
&\leq \left(\frac{q(\p, \nu)-1}{\nu(\p)}\prod_{j=1}^{l+1} \frac{q(\p_j, \nu)-1}{\nu(\p_j)}\right)\left(\nu(\p)\prod_{j=1}^{l} \nu(\p_j)\right)\left(v(\p_{l+1})\right) = (q(\p, \nu)-1) \prod_{j=1}^{l+1} (q(\p_j, \nu)-1)
\end{align*}
But, since $\{q(\p', \nu) \ | \ \p' \in \{\p, \p_1, \cdots, \p_{l+1}\}\}$ is a set of distinct primes, $\Q(\{\zeta_{q(\p', \nu)} \ | \ \p' \in \{\p, \p_1, \cdots, \p_{l+1}\}\}) = \Q(\zeta_{q(\p, \nu)q(\p_1, \nu) \cdots q(\p_{l+1}, \nu)})$ and so $[L^{(l+1)}: \Q] = (q(\p, \nu)-1) \prod_{j=1}^{l+1} (q(\p_j, \nu)-1)$, which is a contradiction.
\end{proof}

\gap

To put this all together, for $I \in \mathcal{I}_A$ we define $f_{I, \nu} = \prod_{\p \in \mathcal{P}_A: \p|I} f_{\p, \nu} $, where the product counts prime ideals with multiplicity. This allows us to formulate the following analogue of a ``Proto-Pellet's Formula" for the M\"obius function for $A$: 

\begin{thm} \label{ProtoPelletIntegers2}
Let $\nu: \mathcal{I}_A \to \N$ be an additive function, and $I \in \mathcal{I}_A$ square-free. Then for $\sigma_{\nu}$ and $f_{I, \nu}$ as defined above, we have:
$$ \mu(I) = (-1)^{\nu(I)} \text{sign}(\sigma_\nu | f_{I, \nu}) $$
\end{thm}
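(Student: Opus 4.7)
My plan is as follows. Since $I$ is square-free, I would start by writing $I = \p_1 \cdots \p_r$ with the $\p_i$ distinct prime ideals, so that $f_{I,\nu} = \prod_{i=1}^r f_{\p_i,\nu}$ directly from the definition of $f_{I,\nu}$, and $\nu(I) = \sum_{i=1}^r \nu(\p_i)$ by additivity of $\nu$. The desired identity then reduces to a single permutation-sign computation on the roots of $f_{I,\nu}$.

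The heart of the argument is the observation, already recorded in the construction, that $\sigma_\nu$ restricts on the roots of each $f_{\p_i,\nu}$ to $\sigma_{\p_i,\nu}$, which acts as a single $\nu(\p_i)$-cycle. Granting for the moment that the root sets of the $f_{\p_i,\nu}$ are pairwise disjoint, these cycles combine into a disjoint product on the roots of $f_{I,\nu}$, and the sign is multiplicative:
\[
\sign(\sigma_\nu \,|\, f_{I,\nu}) \;=\; \prod_{i=1}^r \sign(\sigma_\nu \,|\, f_{\p_i,\nu}) \;=\; \prod_{i=1}^r (-1)^{\nu(\p_i)-1} \;=\; (-1)^{\nu(I)-r}.
\]
Multiplying by $(-1)^{\nu(I)}$ then gives $(-1)^{2\nu(I)-r} = (-1)^r = \mu_A(I)$, which is the claim.

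The one step that requires genuine argument is the pairwise disjointness of the root sets of the $f_{\p_i,\nu}$. I would prove this by noting that the roots of $f_{\p_i,\nu}$ lie in $K_{\p_i,\nu} \subseteq L_{\p_i,\nu} = \Q(\zeta_{q(\p_i,\nu)})$, and that the primes $q(\p_i,\nu)$ were chosen to be distinct; consequently, the cyclotomic fields $L_{\p_i,\nu}$ are linearly disjoint over $\Q$, so $L_{\p_i,\nu} \cap L_{\p_j,\nu} = \Q$ whenever $i \neq j$. Since $f_{\p_i,\nu}$ is irreducible over $\Q$, any shared root would have to be rational, which forces $\deg f_{\p_i,\nu} = \nu(\p_i) = 1$. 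In that degenerate case the associated ``cycle'' is trivial and contributes sign $+1 = (-1)^{\nu(\p_i)-1}$, so one arranges in the construction that the rational numbers $\alpha_{\p_i,\nu}$ attached to primes with $\nu(\p_i) = 1$ are chosen to be distinct, a harmless freedom because $K_{\p_i,\nu} = \Q$ in that case. This is really the only obstacle; once disjointness is in hand the sign identity drops out of the cycle-length bookkeeping as above.
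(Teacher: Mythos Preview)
Your argument is correct and follows the same line as the paper: write $I=\p_1\cdots\p_r$, use that $\sigma_\nu$ acts on the roots of each $f_{\p_i,\nu}$ as a $\nu(\p_i)$-cycle, multiply the cycle signs to get $(-1)^{\nu(I)-r}$, and rearrange. The paper's proof is exactly this one-line computation and does not pause over the disjointness of the root sets; your additional paragraph justifying disjointness via the distinct prime conductors $q(\p_i,\nu)$ (and flagging the degenerate $\nu(\p_i)=1$ case) is a point the paper leaves implicit, so you are being more careful there rather than taking a different route.
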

\begin{proof} For $I \in \I_A$ square-free we have that $I = \p_1 \cdots \p_r$ for some distinct prime ideals $\p_1, \cdots, \p_r \in \mathcal{P}_A$ and so
$$\sign(\sigma_{\nu} | f_{I, \nu}) = \prod_{\p \in \mathcal{P}_A: \p|I} \sign(\sigma_{\nu} | f_{\p, \nu}) = \prod_{\p \in \mathcal{P}_A: \p|I} (-1)^{\nu(\p)-1} = (-1)^{\nu(I) - r} = (-1)^{\nu(n)}\mu(I)$$
\end{proof}

Theorem \ref{ProtoPelletIntegers1} now follows as an immediate corollary.

\gap

Finally, we consider the trivial example as a check

\begin{example}
Let $\nu = \omega_A$ where $\omega_A(I) = \#\{\mathfrak{p} \in \mathcal{P}_A \text{ distinct} : \mathfrak{p}|I\}$
so that $\omega_A(\p) = 1$ for all prime ideals $\p \in \mathcal{P}_A$. Then for each $\p$, $\Gal(f_{\p, \omega_A}) \cong \Z/\omega_A(\p)\Z$ is trivial and $\sigma_{\p, \omega_A}$ is just the identity map, which means that $\text{sign}(\sigma_{\p, \omega_A} | f_{\p, \omega_A}) = 1$. Therefore, for all square-free $I \in \mathcal{I}_A$, we have that $\text{sign}(\sigma_{\omega_A} | f_{I, \omega_A}) = 1$, which by Theorem  \ref{ProtoPelletIntegers2} implies that $\mu_A(I) = (-1)^{\omega_A(I)}$, recovering the definition of $\mu_A$.
\end{example}

\gap

\section*{Acknowledgements}

The author would like to thank Andrew Granville for his guidance, and Richard Hill for useful discussions. The research leading to these results has received funding from the European Research Council under the European Union's Seventh Framework Programme (FP7/2007-2013), ERC grant agreement n$^{\text{o}}$ 670239.

\gap


\begin{thebibliography}{HD}

\bibitem[Con1]{Conrad1} K. Conrad,
\emph{Irreducible values of polynomials: a non-analogy},
Number Fields and Function Fields: Two Parallel Worlds, 71-85, Progress in Mathematics, 239, Birkh¨auser, Basel, 2005.

\bibitem[Con2]{Conrad2} K. Conrad,
\emph{The Galois Correspondence at work},
https://kconrad.math.uconn.edu/blurbs/galoistheory/galoiscorrthms.pdf (retrieved 15 January 2020).

\bibitem[S-S]{S-S} W. Sawin, M. Shusterman,
\emph{On the Chowla and Twin Prime conjectures over $\F_q[T]$},
preprint, arXiv:1808.04001, 2018.

\end{thebibliography}
\end{document}